\newtheorem{thm}{Theorem}[section]
\newtheorem{lma}[thm]{Lemma}
\newtheorem{conj}[thm]{Conjecture}
\theoremstyle{definition}
   \newtheorem{df}[thm]{Definition}
   \newtheorem{ex}[thm]{Example}
\theoremstyle{remark}
\numberwithin{thm}{section}
\numberwithin{figure}{section}
\numberwithin{equation}{section}
\DeclareMathOperator{\PPD}{PPD}
\DeclareMathOperator{\Per}{Per}
\DeclareMathOperator{\TPD}{TPD}
\DeclareMathOperator{\Graph}{Graph}
\newcommand{\F}{\mathbb{F}}
\title{Towards a bifurcation theory for perturbed monomial dynamical systems modulo a prime}
\author{Marcus Nilsson}
\begin{document}
\maketitle
\begin{abstract}
	We investigate perturbed monomial dynamical system over $\F_p$ given by iterations of
	$x\mapsto x^n+c\bmod{p}$, where $c\in \F_p$. Instead of study the systems one at a time we study all of them at the same time. The complex distibution of periodic points is visualized in the so called Periodic Point Diagram, which can be seen as a discrete version of the classical Bifurcation Diagram. We also prove some general results about the distribution of periodic points. We end the article with a conjecture about the total number of periodic points. 
\end{abstract}
\section{Introduction}
We consider the dynamical system given by iterations of
\begin{equation}
	h_c(x)=x^n+c
\end{equation}
over the prime field $\F_p$, the set $\{0,1,\ldots, p-1\}$ with addition and multiplication modulo $p$. We let $n\geq 2$ be an integer and $c\in \{0,1,\ldots, p-1\}$. By $h_c^r$ we mean the $r$-fold composition of $h_c$.
The dynamics of $h_c(x)$ changes dramatically with the value of the parameter $c$. Classically we have a bifurcation if there is a sudden change in the dynamics at a certain parameter value, see for example \cite{Holmgren:1996}. If we use the classical definition of a bifurcation point we can say that we have a bifurcation for every value of $c$. When describing the dynamics we will concentrate on the number of periodic points. A point $a$ is said to be a periodic point if $h_c^r(a)=a$ for some positive $r$, the smallest such $r$ is called the period of $a$. We say that $a$ is an $r$-periodic point. Since we are dealing with a finite set (all residues modulo a prime $p$) every point will after a number of iterations be mapped onto a periodic point. The set of points can therefore be partitioned into two sets, the periodic points and the preperiodic points that will eventually be mapped onto periodic points.  

We will use the notation $\F_p^*$ for the multiplicative group of $\F_p$. 
We will visualize the dynamics of $h_c$ on $\F_p$ by a directed graph $\Graph(V,E)$, where the the vertex set is $V=\F_p$ and the edge set $E$, is the set $\{(x,h_c(x)),x\in V\}$.
\begin{ex}
	Let us study the dynamics of the $h_c(x)=x^2+c$ on $\F_7$ for all seven possible choices of $c$. We iterate the map $x\mapsto x^2+c\bmod{7}$. The result is shown in Figure \ref{dynamicsintro}. Here we can see that the dynamics changes a lot if we change the value of $c$.
	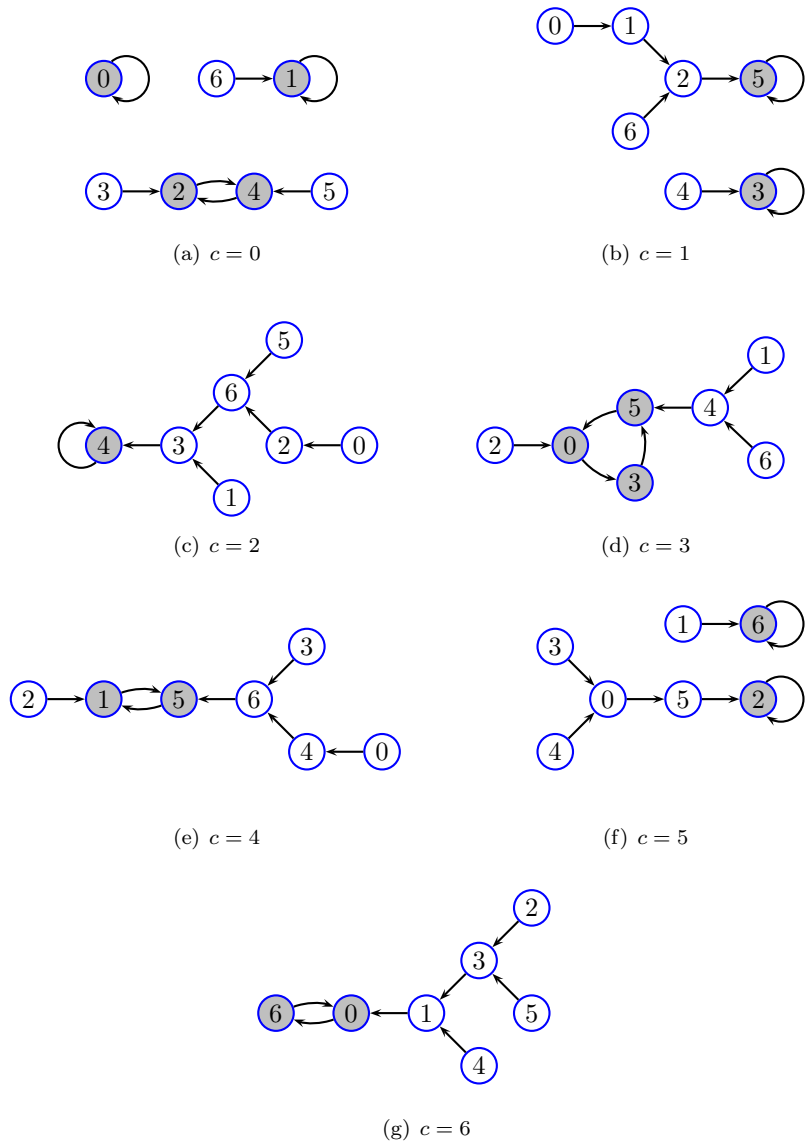
\begin{figure}
		\centering
		\subfigure[$c=0$]{
		\begin{pspicture}(0,0)(5,3)
			\psset{linecolor=blue}
			\Cnode[fillstyle=solid,fillcolor=lightgray](1,2){0}\rput(1,2){$0$}
			\Cnode(2.5,2){6}\rput(2.5,2){$6$}
			\Cnode[fillstyle=solid,fillcolor=lightgray](3.5,2){1}\rput(3.5,2){$1$}
			\Cnode(1,0.5){3}\rput(1,0.5){$3$}
			\Cnode[fillstyle=solid,fillcolor=lightgray](2,0.5){2}\rput(2,0.5){$2$}
			\Cnode(4,0.5){5}\rput(4,0.5){$5$}
			\Cnode[fillstyle=solid,fillcolor=lightgray](3,0.5){4}\rput(3,0.5){$4$}
			\psset{linecolor=black}
			\ncline{->}{6}{1}
			\ncline{->}{3}{2}
			\ncline{->}{5}{4}
			\ncarc[arcangle=20]{->}{2}{4}
			\ncarc[arcangle=20]{->}{4}{2}
			\nccircle[angleA=-90]{<-}{0}{0.3cm}
			\nccircle[angleA=-90]{<-}{1}{0.3cm}
		\end{pspicture}
		}
		\quad
		\subfigure[$c=1$]{
		\begin{pspicture}(0,0)(5,3)
		\psset{linecolor=blue}
			\Cnode(1.3,2.7){0}\rput(1.3,2.7){$0$}
			\Cnode(2.3,2.7){1}\rput(2.3,2.7){$1$}
			\Cnode(3,2){2}\rput(3,2){$2$}
			\Cnode[fillstyle=solid,fillcolor=lightgray](4,2){5}\rput(4,2){$5$}
			\Cnode(2.3,1.3){6}\rput(2.3,1.3){$6$}
			\Cnode(3,0.5){4}\rput(3,0.5){$4$}
			\Cnode[fillstyle=solid,fillcolor=lightgray](4,0.5){3}\rput(4,0.5){$3$}
				\psset{linecolor=black}
			\ncline{->}{0}{1}
			\ncline{->}{1}{2}
			\ncline{->}{6}{2}
			\ncline{->}{2}{5}
			\ncline{->}{4}{3}
			\nccircle[angleA=-90]{<-}{5}{0.3cm}
			\nccircle[angleA=-90]{<-}{3}{0.3cm}
		\end{pspicture}
		}\\
		\subfigure[$c=2$]{
		\begin{pspicture}(-0.5,0)(4.5,3)
			\psset{linecolor=blue}
			\Cnode[fillstyle=solid,fillcolor=lightgray](0.5,1){4}\rput(0.5,1){$4$}
			\Cnode(1.5,1){3}\rput(1.5,1){$3$}
			\Cnode(2.2,0.3){1}\rput(2.2,0.3){$1$}
			\Cnode(2.2,1.7){6}\rput(2.2,1.7){$6$}
			\Cnode(2.9,1){2}\rput(2.9,1){$2$}
			\Cnode(2.9,2.4){5}\rput(2.9,2.4){$5$}
			\Cnode(3.9,1){0}\rput(3.9,1){$0$}
			\psset{linecolor=black}
			\ncline{->}{0}{2}
			\ncline{->}{2}{6}
			\ncline{->}{5}{6}
			\ncline{->}{6}{3}
			\ncline{->}{1}{3}
			\ncline{->}{3}{4}
			\nccircle[angleA=90]{<-}{4}{0.3cm}
		\end{pspicture}
		}
		\quad
		\subfigure[$c=3$]{
		\begin{pspicture}(0,0)(5,3)
			\psset{linecolor=blue}
			\Cnode(0.5,1){2}\rput(0.5,1){$2$}
			\Cnode[fillstyle=solid,fillcolor=lightgray](1.5,1){0}\rput(1.5,1){$0$}
			\Cnode[fillstyle=solid,fillcolor=lightgray](2.36,0.5){3}\rput(2.36,0.5){$3$}
			\Cnode[fillstyle=solid,fillcolor=lightgray](2.36,1.5){5}\rput(2.36,1.5){$5$}
			\Cnode(3.36,1.5){4}\rput(3.36,1.5){$4$}
			\Cnode(4.1,0.8){6}\rput(4.1,0.8){$6$}
			\Cnode(4.1,2.2){1}\rput(4.1,2.2){$1$}
			\psset{linecolor=black}
			\ncline{->}{1}{4}
			\ncline{->}{6}{4}
			\ncline{->}{4}{5}
			\ncline{->}{2}{0}
			\ncarc[arcangle=-20]{->}{5}{0}
			\ncarc[arcangle=-20]{->}{0}{3}
			\ncarc[arcangle=-20]{->}{3}{5}
		\end{pspicture}
		}\\
		\subfigure[$c=4$]{
		\begin{pspicture}(0,0)(5,3)
			\psset{linecolor=blue}
			\Cnode(0,1.5){2}\rput(0,1.5){$2$}
			\Cnode[fillstyle=solid,fillcolor=lightgray](1,1.5){1}\rput(1,1.5){$1$}
			\Cnode[fillstyle=solid,fillcolor=lightgray](2,1.5){5}\rput(2,1.5){$5$}
			\Cnode(3,1.5){6}\rput(3,1.5){$6$}
			\Cnode(3.7,0.8){4}\rput(3.7,0.8){$4$}
			\Cnode(3.7,2.2){3}\rput(3.7,2.2){$3$}
			\Cnode(4.7,0.8){0}\rput(4.7,0.8){$0$}
			\psset{linecolor=black}
			\ncline{->}{2}{1}
			\ncline{->}{0}{4}
			\ncline{->}{4}{6}
			\ncline{->}{3}{6}
			\ncline{->}{6}{5}
			\ncarc[arcangle=20]{->}{1}{5}
			\ncarc[arcangle=20]{->}{5}{1}
		\end{pspicture}
		}
		\quad
		\subfigure[$c=5$]{
		\begin{pspicture}(0,0)(5,3)
		\psset{linecolor=blue}
			\Cnode(1.3,0.8){4}\rput(1.3,0.8){$4$}
			\Cnode(1.3,2.2){3}\rput(1.3,2.2){$3$}
			\Cnode(2,1.5){0}\rput(2,1.5){$0$}
			\Cnode(3,1.5){5}\rput(3,1.5){$5$}
			\Cnode[fillstyle=solid,fillcolor=lightgray](4,1.5){2}\rput(4,1.5){$2$}
			\Cnode(3,2.5){1}\rput(3,2.5){$1$}
			\Cnode[fillstyle=solid,fillcolor=lightgray](4,2.5){6}\rput(4,2.5){$6$}
			\psset{linecolor=black}
			\ncline{->}{4}{0}
			\ncline{->}{3}{0}
			\ncline{->}{0}{5}
			\ncline{->}{5}{2}
			\ncline{->}{1}{6}
			\nccircle[angleA=-90]{<-}{2}{0.3cm}
			\nccircle[angleA=-90]{<-}{6}{0.3cm}
		\end{pspicture}
		}\\
		\subfigure[$c=6$]{
		\begin{pspicture}(0,0)(5,3)
		\psset{linecolor=blue}
			\Cnode[fillstyle=solid,fillcolor=lightgray](0.5,1.2){0}\rput(0.5,1.2){$6$}
			\Cnode[fillstyle=solid,fillcolor=lightgray](1.5,1.2){6}\rput(1.5,1.2){$0$}
			\Cnode(2.5,1.2){1}\rput(2.5,1.2){$1$}
			\Cnode(3.2,0.5){4}\rput(3.2,0.5){$4$}
			\Cnode(3.2,1.9){3}\rput(3.2,1.9){$3$}
			\Cnode(3.9,1.2){5}\rput(3.9,1.2){$5$}
			\Cnode(3.9,2.6){2}\rput(3.9,2.6){$2$}
			\psset{linecolor=black}
			\ncline{->}{2}{3}
			\ncline{->}{5}{3}
			\ncline{->}{3}{1}
			\ncline{->}{4}{1}
			\ncline{->}{1}{6}
			\ncarc[arcangle=20]{->}{0}{6}
			\ncarc[arcangle=20]{->}{6}{0}
		\end{pspicture}
		}
	\label{dynamicsintro}
	\caption{The dynamics of the seven systems $x\mapsto x^2+c \bmod{7}$.}
	\end{figure}
\end{ex}
The systems $x\mapsto x^2+c$ on $\F_p$ have been studied before. In \cite{Rogers:1996} Rogers discribes the connected components of the directed graph of $x\mapsto x^2$. Rogers stated that for $c\ne 0$ the dynamics seems to be beyond description. In \cite{Vasiga:2004}
 Vasiga and Shallit succeeded in describing the dynamics of $x\mapsto x^2-2$. Also Gilbert et al in \cite{Gilbert:2001} contributed to the understanding of the dynamics of $x\mapsto x^2-2$. We can see this already in Figure \ref{dynamicsintro},  the graphs for $c=5$ and $c=0$ seems more symmetric than the other graphs.   In \cite{Khrennikov/Nilsson:2001} the number of cycles (periodic orbits) of $x\mapsto x^n \bmod{p}$ are calculated by using M\"obius inversion formula. Asymptotical behavior for the number of periodic points when $p\to \infty$ where investigated. The results from \cite{Khrennikov/Nilsson:2001} where extended in \cite{Nilsson:2003} and in \cite{Nilsson:2007}.  
 
In this article we will construct a bifurcation diagram analogously to the classical diagram describing for example the bifurcations of the logistic map $x\mapsto rx(1-x)$ over the real numbers. See for example \cite{Holmgren:1996} for details on this dynamical systems. In Figure \ref{logisticbifurcation} a bifurcation diagram for the logistic map is given. A variant of the Mathematica program suggested by \cite{Holmgren:1996} was used for producing the picture.
\begin{figure}
\centering
\includegraphics{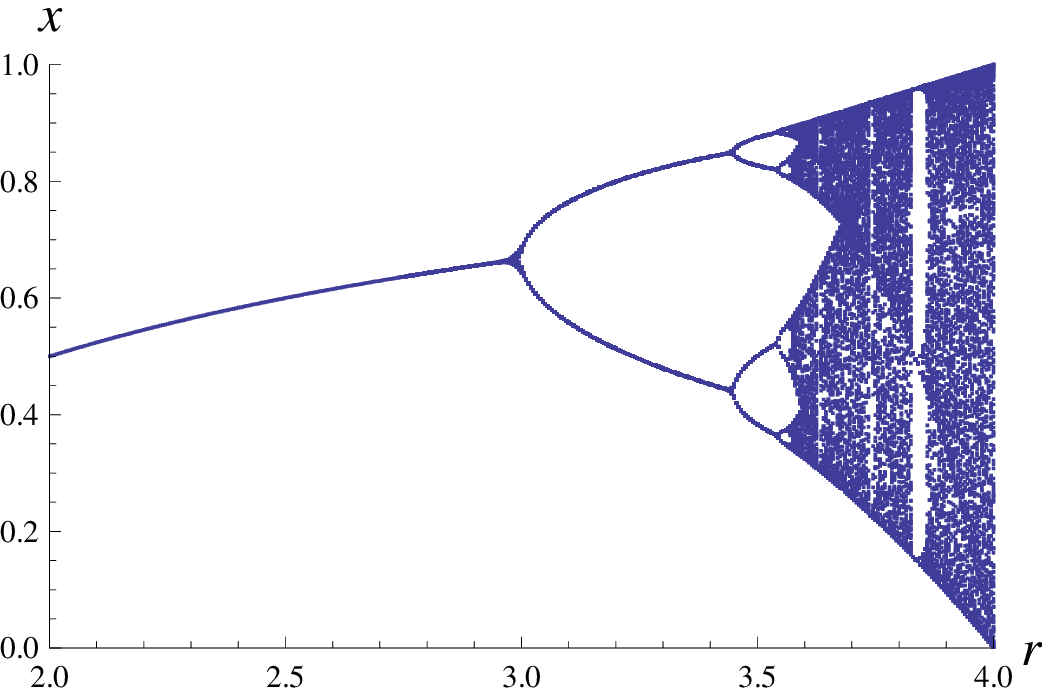}
	\caption{The bifurcation diagram of $x\mapsto rx(1-x)$ for $r\in [2,4]$. This is just iterations of the starting value $x=0.5$.}
	\label{logisticbifurcation}	
\end{figure}

This article is organized as follows: In Section \ref{sec:PPD} we introduce the Periodic Point Diagram ($\PPD$) and compare it with the classical bifurcation diagram. We also prove some general properties of the $\PPD$. Then in Section \ref{sec:quadratic} and Section \ref{sec:npower} we investigate the properties of $\PPD$ in more detail for $n=2$ and $n>2$, respectively. In Section \ref{sec:conjectures} we formulate a conjecture on the asymptotical behavior of the total number of periodic points of  set $\{x\mapsto x^n+c; c\in \F_p\}$ of dynamical systems when $p\to\infty$.  
 
\section{The periodic point diagram}\label{sec:PPD}
First of all there is an important difference between the bifurcation diagram we will construct for $x\mapsto x^n+c \bmod{p}$ and the classical one; our diagram is discrete. We will include every possible value of both $x$ and $c$.
We construct the diagram in the following way: On the horizontal axis we have all the possible values of $c$, $c\in\mathbb{F}_p$ and on the vertical axis we have the  possible values of $x$, $x\in \mathbb{F}_p$. For each $c$ we mark the $x$-values that are periodic points.
\begin{ex}
	Let us consider the dynamical system $x\mapsto x^2+c \bmod{7}$. In Figure \ref{first-ppd-7} the bifurcation diagram for this system is shown. It is hard to see any pattern from one $c$ to the next. However, there seems to be diagonal lines with no periodic points.
	\begin{figure}
	\centering
		\includegraphics[scale=0.8]{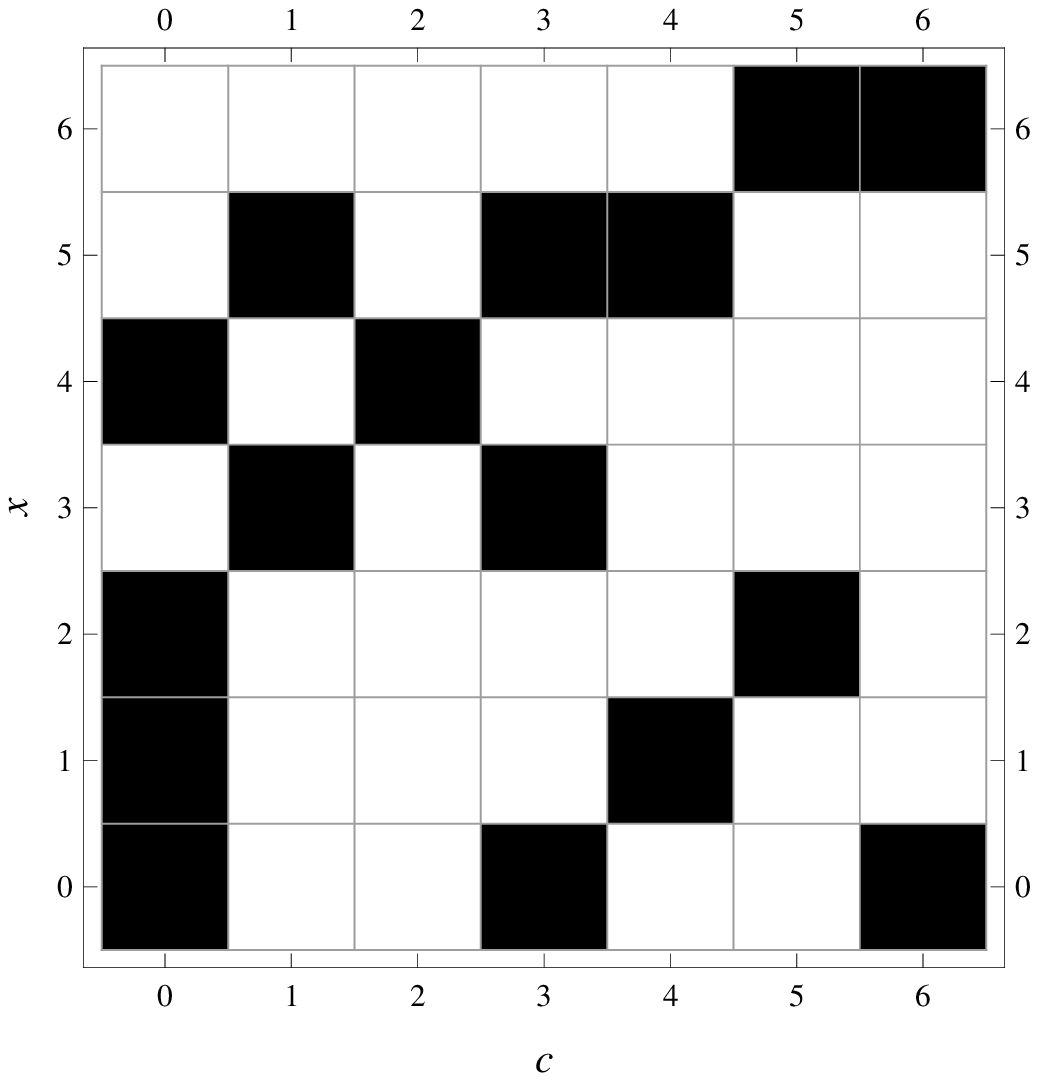}
		\caption{The bifurcation diagram of $x\mapsto x^2+c \bmod{7}$.}
		\label{first-ppd-7}
	\end{figure}
\end{ex}

The diagram in Figure \ref{first-ppd-7} was made by using a Mathematica program, that naively computes the periodic points for each $c$:
Iterate every point in $\mathbb{F}_p$, $p$ times then start saving the next $p$ iterates. Then take the union of all resulting points. In this way we are sure to have found all the periodic points and no preperiodic points.
Of course there a more effective algorithms that uses for example Floyd's cycle finding algorithm (Tortoise and Hare), see \cite{Floyd:1967,Knuth2:1969}, but creating the diagrams in this article the naive way is enough. 

We now make some formal definitions. First of all, instead of saying ''bifurcation diagram'' we call the diagram \emph{Periodic point diagram} ($\PPD$), it seems like a better name since we do not really have the same kind of bifurcation in our finite case as in the classical case. The PPD of the parametrized dynamical system given by iterations of the mapping $x\mapsto h_c(x)$ modulo $p$ is denoted by $\PPD(h_c,p)$. Let $a\in \F_p$.
By a diagonal line in the $\PPD(h_c,p)$ we mean all points $(i,j)$ in the diagram satisfying the equation $j=i+a\bmod{p}$. For $a=0$ we have the diagonal from the lower left corner to the upper right corner. All other diagonal lines are divided into two parts, see Figure \ref{diagonal-picture}.  
\begin{figure}
\centering
\includegraphics[scale=0.8]{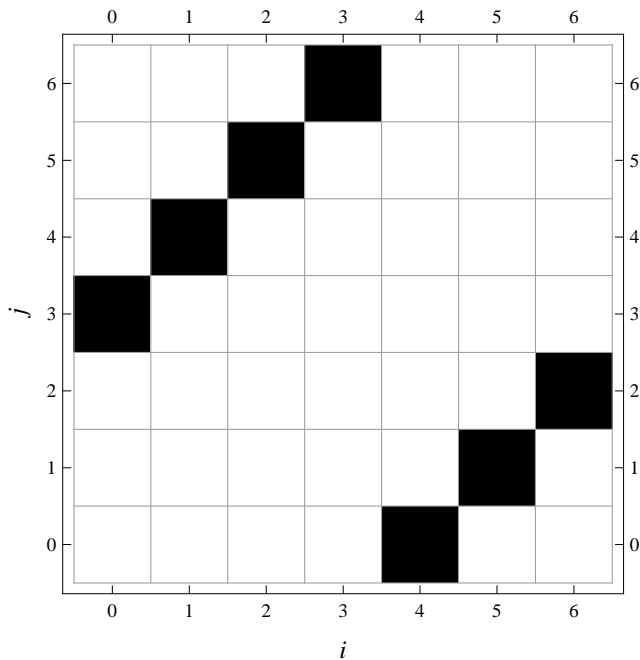}
\caption{The diagonal line $j=i+3 \bmod{7}$ in the $\PPD(h_c,7)$.}
\label{diagonal-picture}
\end{figure}
\begin{df}
	By a \emph{desert line} of $\PPD(h_c,p)$ we mean a diagonal line with no periodic points. 
\end{df}
\begin{ex}\label{ex:ppd71}
	Let us now consider the dynamical system $x\mapsto x^2+c\bmod{71}$. In Figure \ref{ppd-71} we have its $\PPD$. Here the desert lines are clearly visible.
	\begin{figure}
		\centering
		\includegraphics[scale=0.8]{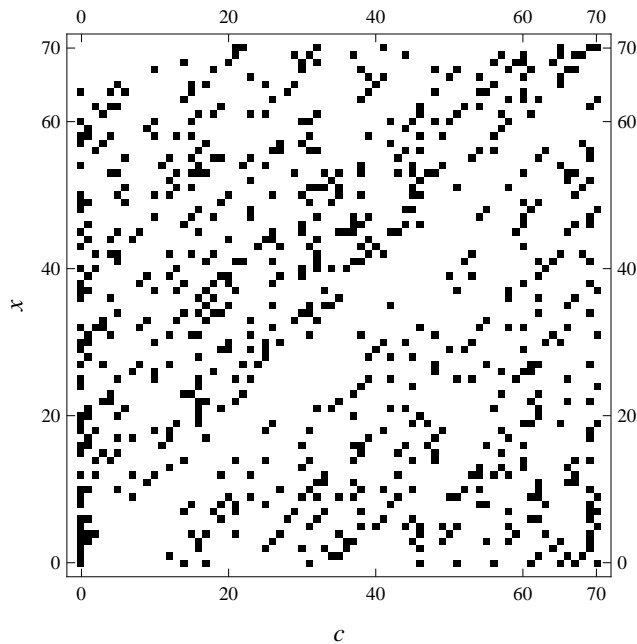}
		\caption{The bifurcation diagram of $x\mapsto x^2+c\bmod{71}$.}
		\label{ppd-71}
	\end{figure}
\end{ex}
Before we enter the discussion on how many desert lines there are, we take a look at some conditions for $h_c$ to be bijective.
\begin{lma}
	If $h_0$ is bijective then $h_c$ is bijective for all $c\in\{0,1,\ldots, p-1\}$.
\end{lma}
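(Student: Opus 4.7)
The plan is to observe that $h_c$ factors as a composition of $h_0$ with a translation by $c$, and that translation is always a bijection of $\F_p$.

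Concretely, I would write $h_c(x) = h_0(x) + c = (\tau_c \circ h_0)(x)$, where $\tau_c : \F_p \to \F_p$ is defined by $\tau_c(y) = y + c$. The map $\tau_c$ is a bijection for every $c$ since it has inverse $\tau_{-c}$ (the additive group $(\F_p,+)$ acts freely on itself). Hence if $h_0$ is bijective, $h_c$ is a composition of two bijections on the finite set $\F_p$ and is therefore bijective.

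Equivalently, one can argue directly by injectivity: $h_c(x) = h_c(y)$ is the equation $x^n + c = y^n + c$, which simplifies to $h_0(x) = h_0(y)$, so injectivity of $h_0$ forces $x=y$; since $\F_p$ is finite, injectivity of $h_c$ implies bijectivity.

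There is no real obstacle here — the lemma is essentially the observation that adding a constant is a group automorphism of $(\F_p,+)$, so bijectivity of $x\mapsto x^n$ is preserved under affine shifts of the output. The only thing worth noting is that the hypothesis is non-vacuous precisely when $\gcd(n, p-1) = 1$, but this characterization is not needed for the proof itself.
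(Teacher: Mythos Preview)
Your proof is correct, and your second (injectivity) argument is exactly the paper's own proof: from $x^n+c=y^n+c$ one cancels $c$ to get $x^n=y^n$, and bijectivity of $h_0$ gives $x=y$. One minor terminological quibble: the translation $\tau_c$ is indeed a bijection of $\F_p$, but it is not a group \emph{automorphism} of $(\F_p,+)$ unless $c=0$, since it does not fix the identity element.
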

\begin{proof}
	Assume that the map $h_0(x)=x^n$ is bijective.
	Let $x,y\in \F_p$. If $x^n+c= y^n+c$ then
	$x^n = y^n $ and this implies $x = y$ since 
	$x\mapsto x^n \bmod{p}$ is bijective.
\end{proof}
In fact, we can generalize the lemma to: If $h_c$ is bijective for one $c$ it is bijective for all. 
Moreover, if the map $h_c$ is bijective then the dynamical system can't have any preperiodic points. Hence the dynamics has a pure cycle structure if and only if $h_c$ is bijective.
\begin{thm}\label{thm_bijective}
	The dynamics of $h_c(x)=x^n+c$ has a pure cycle structure if and only if
	$\gcd(n,p-1)=1$.
\end{thm}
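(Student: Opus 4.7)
The plan is to reduce the theorem to a statement about the map $x\mapsto x^n$ on $\F_p^*$ and then exploit the cyclic structure of that group. First I would invoke the observation just made in the text: $h_c$ has a pure cycle structure if and only if $h_c$ is bijective, and by the generalization of the preceding lemma, $h_c$ is bijective for \emph{some} $c$ if and only if it is bijective for \emph{every} $c$. Taking $c=0$, the question becomes whether $h_0(x)=x^n$ is a bijection on $\F_p$. Since $h_0(0)=0$ and $h_0$ sends $\F_p^*$ to $\F_p^*$, it suffices to decide when $\varphi:\F_p^*\to\F_p^*$, $\varphi(x)=x^n$, is a bijection of the (finite) multiplicative group.

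For the forward direction ($\Leftarrow$), I would assume $\gcd(n,p-1)=1$ and show $\varphi$ is injective: if $x^n=1$ then the order of $x$ divides both $n$ and $|\F_p^*|=p-1$, hence divides $\gcd(n,p-1)=1$, forcing $x=1$. Since $\varphi$ is a group homomorphism with trivial kernel on a finite group, it is bijective. For the reverse direction ($\Rightarrow$), I would suppose $d=\gcd(n,p-1)>1$ and pick a generator $g$ of the cyclic group $\F_p^*$. Then $y=g^{(p-1)/d}$ has order $d$, so $y\neq 1$ but $y^n = g^{n(p-1)/d} = 1$ because $d\mid n$. Hence $\varphi(y)=\varphi(1)$, so $\varphi$ fails to be injective, $h_0$ is not a bijection, and by the lemma neither is any $h_c$; the dynamics must then have preperiodic points and cannot be purely cyclic.

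The only mildly subtle step is the appeal to the \emph{extended} lemma (``if $h_c$ is bijective for one $c$ it is bijective for all''), which is stated but not formally proved in the text; however, it follows from exactly the same argument as the proof given, since the translation $y\mapsto y+(c'-c)$ is a bijection of $\F_p$ and commutes with the analysis of $x^n$. Everything else is a standard application of the fact that a homomorphism of finite cyclic groups $x\mapsto x^n$ is bijective precisely when $n$ is coprime to the order of the group, so there is no real obstacle.
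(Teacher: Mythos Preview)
Your proposal is correct and follows essentially the same route as the paper: both reduce to showing that $x\mapsto x^n$ is a bijection on $\F_p$ precisely when $\gcd(n,p-1)=1$, and both exploit the cyclic structure of $\F_p^*$. The paper argues via indices with respect to a primitive root $\mu$, reducing injectivity to the linear congruence $ni\equiv nj\pmod{p-1}$ having a unique solution, which handles both directions at once; you instead treat $\varphi(x)=x^n$ as a group endomorphism, showing its kernel is trivial when $\gcd(n,p-1)=1$ by an order argument, and exhibiting a nontrivial kernel element $g^{(p-1)/d}$ otherwise. These are minor stylistic variants of the same standard fact about endomorphisms of finite cyclic groups.
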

\begin{proof}
	In this proof we will use results from elementary number theory. See for example
	\cite{Rosen:2010} for details on primitive roots and solutions of congruence equations.
	The theorem follows if we can prove that $x\mapsto x^n\bmod{p}$
 is bijective if and only if $\gcd(n,p-1)=1$. Let $\mu$ be a primitive root modulo $p$. Let $x,y\in \F_p^*$ then there are $i,j\in \{0,1,\ldots,p-2\}$ such that
 $x=\mu^i$ and $y=\mu^j$. We have $x^n = y^n$ if and only if $ni\equiv nj\pmod{p-1}$. This equation has the unique solution $i\equiv j\pmod{p-1}$ (hence $i=j$ since they are both in $\{0,1,\ldots,p-2\}$) if and only if $\gcd(n,p-1)=1$.  Also note that $x^n = 0$ in $\F_p$ if and only if $x=0$.
 \end{proof}
In the next section we will prove that there are exacly $(p-1)/2$ desert lines in $\PPD(x^2+c,p)$ for $p\geq 3$.

\section{The quadratic case}\label{sec:quadratic}
In  this section we look at the $\PPD$ for the quadratic mapping $x\mapsto x^2+c\bmod{p}$.
We first recall the definition of quadratic residues and quadratic non-residues.
\begin{df}
	Let $p$ be a prime and let $a$ be an integer such that $\gcd(a,p)=1$. We say that $a$ is a \emph{quadratic residue} if the equation
	$x^2= a$ has a solution in $\F_p$. If it doesn't have a solution, then $a$ is called a \emph{quadratic non-residue}.
\end{df}
We have the following well known result for quadratic residues, see \cite{Rosen:2010}:
\begin{thm}\label{thm_quadratic_res}
	Let $p$ be an odd prime. Then there are $(p-1)/2$ quadratic residues and $(p-1)/2$ quadratic non-residues.
\end{thm}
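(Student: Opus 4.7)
The plan is to exhibit the set of quadratic residues as the image of the squaring map $\phi\colon \F_p^* \to \F_p^*$, $x\mapsto x^2$, and to show this map is exactly 2-to-1. Since $\F_p$ is a field, $\phi(x)=\phi(y)$ is equivalent to $(x-y)(x+y)=0$, i.e.\ $y=\pm x$. For odd $p$ and $x\in \F_p^*$ we have $x\neq -x$, so each fiber of $\phi$ above a quadratic residue has exactly two elements. Hence the image has size $|\F_p^*|/2=(p-1)/2$, and the complement in $\F_p^*$, i.e.\ the set of quadratic non-residues, has the same size.

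An alternative route, which would harmonize with the proof style already used in Theorem \ref{thm_bijective}, is to fix a primitive root $\mu$ modulo $p$. Every element of $\F_p^*$ is $\mu^k$ for a unique $k\in\{0,1,\ldots,p-2\}$, and $\mu^k$ is a quadratic residue precisely when $k$ is even: if $k=2\ell$ then $\mu^k=(\mu^\ell)^2$, and conversely if $\mu^k=x^2$ with $x=\mu^j$ then $2j\equiv k\pmod{p-1}$, which forces $k$ to be even because $p-1$ is even. Since exactly half of the exponents in $\{0,1,\ldots,p-2\}$ are even, we again obtain $(p-1)/2$ residues and $(p-1)/2$ non-residues.

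There is essentially no obstacle here; both approaches are standard. The only place to be careful is the observation that $x\neq -x$ in $\F_p^*$ for odd $p$, which is exactly where the hypothesis that $p$ is odd enters the argument. I would choose the first (fiber-counting) proof for brevity and give the primitive-root version only as a remark, since the author has already cited \cite{Rosen:2010} for this kind of material.
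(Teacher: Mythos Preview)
Your arguments are both correct and standard. Note, however, that the paper does not actually prove Theorem~\ref{thm_quadratic_res}: it is stated as a well-known result with a reference to \cite{Rosen:2010}, and no proof is given. So there is nothing in the paper to compare your approach against; either of your proofs would serve as a self-contained justification if one were desired.
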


\begin{thm}
	Let $p\geq 3$ be a prime. If $a$ is a quadratic non-residue modulo $p$ then $j= i+a\bmod{p}$ is a desert line.
	If $a$ is a quadratic residue there are two fixed points on the line $j= i+a\bmod{p}$. If $a=0$ there are one fixed point on the line
	$j\equiv i\pmod{p}$.
	Hence, there are exactly $(p-1)/2$ desert lines in $\PPD(h_c,p)$.
\end{thm}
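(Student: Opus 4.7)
The plan is to reduce the statement to a simple observation about the algebraic form of $h_c$, then count fixed points.

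First I would establish the key lemma: \emph{if $x$ is a periodic point of $h_c$, then $x-c$ is either $0$ or a quadratic residue mod $p$.} To see this, let $x_0,x_1,\ldots,x_{r-1},x_r=x_0$ be the orbit of a periodic point, where $x_{k+1}=h_c(x_k)=x_k^2+c$. Then for every index $k$ (including $k=0$, using $x_0=x_{r-1}^2+c$) one has $x_k-c=x_{k-1}^2$. So $x-c$ is a square in $\F_p$ for every point $x$ in the cycle. Since a point $(c,x)$ lies on the diagonal $j=i+a\bmod p$ exactly when $x-c\equiv a\pmod p$, a periodic point on this diagonal forces $a\in\{0\}\cup\{\text{QRs}\}$. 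Contrapositively, if $a$ is a quadratic non-residue, the diagonal $j=i+a\bmod p$ has no periodic points, i.e.\ it is a desert line.

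Next I would locate the fixed points on the non-desert diagonals, since these alone are enough to witness that the diagonals in question are not deserts. A fixed point of $h_c$ sitting on the line $j=i+a$ is a pair $(c,x)$ with $x=c+a$ and $x^2+c=x$; eliminating $x$ gives $(c+a)^2=a$. If $a\neq 0$ is a quadratic residue with $a=b^2$, the two solutions $c+a=\pm b$ yield two distinct fixed points on the line. If $a=0$, the equation forces $c+a=c=0$, giving the single fixed point $(0,0)$ on the main diagonal $j\equiv i\pmod p$.

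Finally I would count. Among the $p$ possible values of $a\in\F_p$, the value $a=0$ contributes a non-desert diagonal (one fixed point), the $(p-1)/2$ quadratic residues contribute non-desert diagonals (two fixed points each, by Theorem~\ref{thm_quadratic_res}), and the remaining $(p-1)/2$ quadratic non-residues are precisely the desert lines.

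The only real obstacle is the key lemma in the first paragraph, and the content there is essentially the observation $x_{k+1}-c=x_k^2$; everything after that is bookkeeping and the count from Theorem~\ref{thm_quadratic_res}.
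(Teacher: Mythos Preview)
Your proposal is correct and follows essentially the same approach as the paper. Your key lemma---that a periodic point $x$ of $h_c$ satisfies $x-c=y^2$ for some $y$ (namely its predecessor in the cycle)---is exactly the paper's equation $(h_i^{r-1}(i+a))^2=a$ rephrased, and your fixed-point count via $(c+a)^2=a$ matches the paper's argument verbatim.
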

\begin{proof}
	We have an $r$-periodic point at position $(i,j)$ in the $\PPD$-diagram if $h_{i}^r(j)= j\bmod{p}$.
	Consider the points $(i,j)$ on the diagonal line $j= i+a \bmod{p}$.
	Assume that we have an $r$-periodic point on this line. That is, for some $j\in\{0,1,\ldots,p-1\}$ we have
	\[
		h^r_{i}(i+a)= i+a
	\] 
	We have
	\[
		\left(h^{r-1}_{i}(i+a)\right)^2+i= i+a,
	\]
	and
	\begin{equation}\label{eq:quad}
		\left(h^{r-1}_{i}(i+a)\right)^2= a.
	\end{equation}
	Hence, if $a$ is a quadratic non-residue then \eqref{eq:quad} has no solution and $j= i+a \bmod{p}$ is a desert line. From Theorem \ref{thm_quadratic_res} it follows that we have at least $(p-1)/2$ desert lines in the $\PPD$.
	
	Let $r=1$ and let $a$ be a quadratic residue. We have  from \eqref{eq:quad} that $(i+a)^2= a$. There are exactly two values of $i$ that solves this equation since there are exactly two square roots of a quadratic residue. For $a=0$ there is exacty one fixed point (for $i=0$). So, we have exactly $(p-1)/2$ desert lines. 
\end{proof}
\section{The $n$-power case}\label{sec:npower}
In this section we will generalize our investigations from last section to the $\PPD$ of $x\mapsto x^n+c\bmod{p}$.
First we note that if $n$ and $p-1$ are relatively prime then it follows from Theorem \ref{thm_bijective} that we have a pure cycles structure for all values of $c$. This means that we have no desert lines in this case. Moreover, the whole $\PPD$ is filled, since every point is a periodic point. Before we learn more about the desert lines we recall some facts about $n$-power residues.
\begin{df}
	Let $p$ be a prime and let $a\in \F_p^*$. Then $a$ is said to be an $n$-power residue if
	$x^n= a$ has a solution in $\F_p$. Otherwise, $a$ is called an $n$-power non-residue.
\end{df}
\begin{thm}\label{thm:n_power_residues}
	We have that $a \in \F_p$ is an $n$-power residue if and only if
	\[
		a^{(p-1)/\gcd(n,p-1)}= 1.
	\]
	There are $(p-1)/\gcd(n,p-1)$ such $a$ in $\F_p$. For each $n$-power residue $a$ the equation $x^n=a$ has $\gcd(n,p-1)$ solutions in $\F_p$. 
\end{thm}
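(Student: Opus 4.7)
The plan is to reduce everything to a linear congruence via a primitive root, mirroring the method used in the proof of Theorem \ref{thm_bijective}. Let $\mu$ be a primitive root modulo $p$, and set $d = \gcd(n, p-1)$. Every $a \in \F_p^*$ can be written uniquely as $a = \mu^k$ for some $k \in \{0, 1, \ldots, p-2\}$, and likewise $x = \mu^i$. Then the equation $x^n = a$ becomes $\mu^{ni} = \mu^k$, i.e.\
\[
    ni \equiv k \pmod{p-1}.
\]
From here the entire statement becomes a question about this linear congruence.

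Next, I invoke the standard result from elementary number theory (e.g.\ \cite{Rosen:2010}): the linear congruence $ni \equiv k \pmod{p-1}$ has a solution if and only if $d \mid k$, and in that case it has exactly $d$ solutions modulo $p-1$. This immediately yields the last assertion of the theorem: each $n$-power residue $a$ has exactly $\gcd(n, p-1)$ preimages under $x \mapsto x^n$ in $\F_p^*$.

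Then I translate the criterion $d \mid k$ into the intrinsic criterion on $a$. Since $a^{(p-1)/d} = \mu^{k(p-1)/d}$ and $\mu$ has order $p-1$, this equals $1$ in $\F_p$ precisely when $(p-1) \mid k(p-1)/d$, which is equivalent to $d \mid k$. Thus $a$ is an $n$-power residue if and only if $a^{(p-1)/d} = 1$.

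Finally, for the count: the values $k \in \{0, 1, \ldots, p-2\}$ satisfying $d \mid k$ are exactly $0, d, 2d, \ldots, \bigl((p-1)/d - 1\bigr)d$, giving $(p-1)/d$ values of $a$. No step here presents a real obstacle; the only point requiring care is cleanly separating the ``existence'' part (the criterion $a^{(p-1)/d} = 1$) from the ``multiplicity'' part ($d$ solutions per residue), both of which fall out of the same reduction to the linear congruence.
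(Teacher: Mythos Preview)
Your proof is correct and is the standard argument via primitive roots and the linear congruence $ni \equiv k \pmod{p-1}$; the existence criterion, the count, and the multiplicity all fall out exactly as you say.

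As for comparison with the paper: the paper does not actually prove this theorem at all but simply refers the reader to \cite{Sierpinski:1988}. So your write-up is not merely equivalent to the paper's treatment --- it supplies a self-contained proof where the paper gives only a citation. The argument you give is essentially the one found in such standard references, and it also dovetails nicely with the primitive-root argument already used in the proof of Theorem~\ref{thm_bijective}, so it fits naturally into the paper.
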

See for example \cite{Sierpinski:1988} for a proof of this theorem and for more details on $n$-power residues.
\begin{thm}\label{thm:n_power_desert}
	We have that $j= i+a$ is a desert line in $\PPD(h_c,p)$ if and only if $a$ is an $n$-power non-residues modulo $p$. Hence there are exactly $p-1-(p-1)/\gcd(n,p-1)$ desert lines.
	Moreover, if $a$ is an $n$-power residue then there are $\gcd(p-1,n)$ fixed points on the line $j=i+a$. For $a=0$ there is one fixed point (for $i=0$). 
\end{thm}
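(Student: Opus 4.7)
The plan is to mimic exactly the argument used for the quadratic case, replacing the statement ``$x^2 = a$ has a solution iff $a$ is a quadratic residue'' with its $n$-power analogue (Theorem \ref{thm:n_power_residues}). The starting observation is that a point $(i,j)$ with $j = i + a$ in $\PPD(h_c,p)$ is an $r$-periodic point precisely when $h_i^r(i+a) = i+a$, which, by peeling off the last iterate, becomes
\[
\bigl(h_i^{r-1}(i+a)\bigr)^n = a.
\]
Thus periodicity on the line $j = i + a$ forces $a$ to lie in the image of $x \mapsto x^n$ on $\F_p$; equivalently, $a$ must be either $0$ or an $n$-power residue.

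First I would handle the negative direction: if $a$ is an $n$-power non-residue, the displayed equation above has no solution in $\F_p$, so no point of the line $j = i+a$ can be periodic, and the line is a desert line. Conversely, if $a$ is an $n$-power residue I would exhibit fixed points on the line (which are, in particular, periodic), which proves the line is not a desert line and simultaneously delivers the fixed-point count. Taking $r = 1$ in the displayed equation reduces to $(i+a)^n = a$. Setting $y = i+a$, Theorem \ref{thm:n_power_residues} gives exactly $\gcd(n,p-1)$ solutions $y \in \F_p$ to $y^n = a$, and each $y$ corresponds to a unique $i = y - a$, yielding $\gcd(n,p-1)$ fixed points on the line $j = i+a$.

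The case $a = 0$ has to be separated, because Theorem \ref{thm:n_power_residues} is stated for $a \in \F_p^*$. Here the equation $y^n = 0$ has the unique solution $y = 0$ in $\F_p$, so the single fixed point on the diagonal $j \equiv i$ is $(0,0)$, as claimed.

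Finally I would count desert lines. The diagonals are indexed by $a \in \{0,1,\ldots,p-1\}$. The diagonal $a = 0$ is never a desert line (it contains the fixed point $(0,0)$), and among the $p-1$ nonzero values of $a$, Theorem \ref{thm:n_power_residues} tells us that $(p-1)/\gcd(n,p-1)$ are $n$-power residues and the remaining $p-1-(p-1)/\gcd(n,p-1)$ are non-residues. By the equivalence established above, exactly the latter give desert lines, yielding the stated count. There is no real obstacle here; the only subtlety is remembering to treat $a = 0$ separately so that the count of desert lines excludes the main diagonal and matches the formula.
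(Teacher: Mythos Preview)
Your proof is correct and follows essentially the same approach as the paper's own proof: peel off the last iterate to obtain $\bigl(h_i^{r-1}(i+a)\bigr)^n = a$, conclude that non-residues give desert lines, and then specialize to $r=1$ to exhibit the $\gcd(n,p-1)$ fixed points (respectively, one fixed point when $a=0$) on each non-desert line. Your write-up is in fact slightly more explicit than the paper's in handling the $a=0$ case and in justifying the final count.
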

\begin{proof}
	Consider the points $(i,j)$ on the diagonal line $j=i+a$.
	Assume that we have an $r$-periodic point on this line. That is, for some $i\in\{0,1,\ldots,p-1\}$ we have
	\[
		h^r_{i}(i+a)=i+a.
	\] 
	We have
	\[
		\left(h^{r-1}_{i}(i+a)\right)^n+i= i+a,
	\]
	and
	\begin{equation}\label{eq:npower}
		\left(h^{r-1}_{i}(i+a)\right)^n = a.
	\end{equation}
	Hence, if $a$ is an $n$-power non-residue then $j= i+a$ is a desert line.
	
	If $r=1$ and $a$ is an $n$-power residue then from \eqref{eq:npower} we have $(i+a)^n+i= i+a\pmod{p}$ and hence
	$(i+a)^n= a$. From Theorem \ref{thm:n_power_residues} it follows that there are $\gcd(n,p-1)$ different $i\in \F_p^*$ satisfying this equation.
	For $r=1$ and $a=0$ we have the unique solution $i=0$. Hence there are exactly 
	\[
		p-1-\frac{p-1}{\gcd(n,p-1)}=\frac{p-1}{\gcd(n,p-1)}(\gcd(n,p-1)-1)
	\] 
	desert lines in the $\PPD$. 
\end{proof}
\begin{ex}
	Let us look at the $\PPD$ of the dynamical system $x\mapsto x^5+c\bmod{71}$ in Figure \ref{ppd-71-n5}.
	\begin{figure}	
		\centering
		\includegraphics{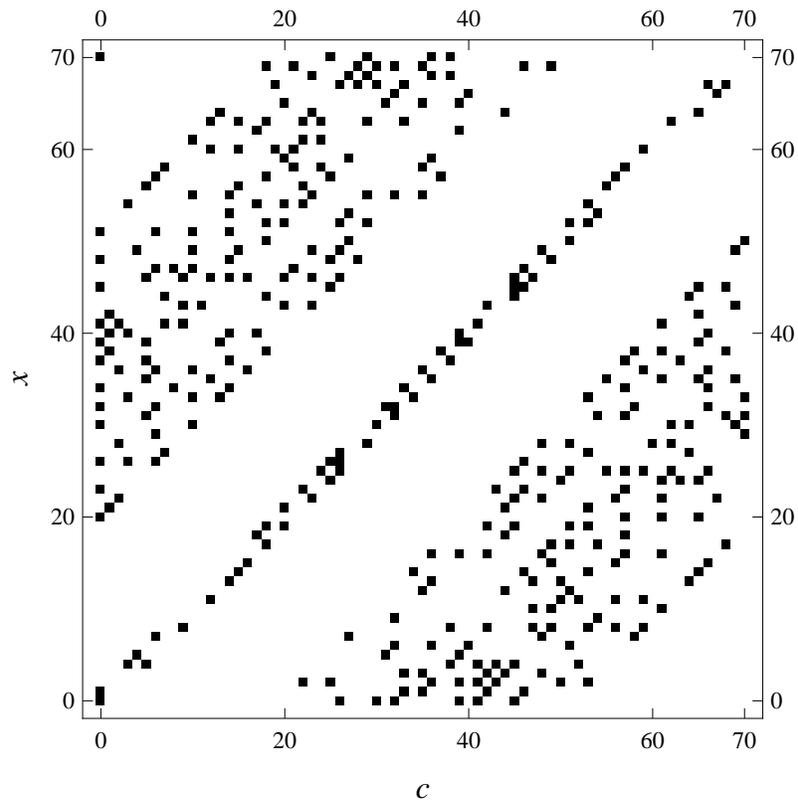}
		\label{ppd-71-n5}
		\caption{The $\PPD(x^5+c,71)$.}
	\end{figure}
	Here we see the increased number of desert lines compared to the $x\mapsto x^2+c\bmod{71}$ in Example \ref{ex:ppd71}. 
\end{ex}
We end this section with an observation regarding symmetry when $n$ is odd.
\begin{df}
	By the \emph{reduced Power Point Diagram} of $h_c(x)$ over $\F_p$ we mean the diagram we get by removing the column $c=0$ and the row $x=0$ from the $\PPD$.
\end{df}
\begin{thm}\label{thm:odd_n_power}
	If $n$ is odd then the reduced $\PPD$ of $h_c$ over $\F_p$ is symmetric with respect to the line $j=-i\bmod{p}$. That is, if $a$ is an $r$-periodic point of $h_c(x)$ then $-a$ is an $r$-periodic points of $h_{-c}(x)$ ($-a$ and $-c$ are the additive inverses of $a$ and $c$ modulo $p$).
\end{thm}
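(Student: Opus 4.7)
The plan is to exploit the oddness of $n$ via the identity $(-x)^n = -x^n$, which will yield a clean conjugacy between the dynamics of $h_c$ and $h_{-c}$ obtained by negating $x$.

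First I would establish the one-step intertwining relation
\[
    h_{-c}(-x) = (-x)^n + (-c) = -x^n - c = -\bigl(x^n+c\bigr) = -h_c(x),
\]
valid for every $x \in \F_p$ precisely because $n$ is odd. Then a short induction on $r$ would promote this to the $r$-fold relation
\[
    h_{-c}^{\,r}(-x) = -h_c^{\,r}(x), \qquad r \geq 1,
\]
the inductive step being $h_{-c}^{\,r+1}(-x) = h_{-c}\bigl(h_{-c}^{\,r}(-x)\bigr) = h_{-c}\bigl(-h_c^{\,r}(x)\bigr) = -h_c\bigl(h_c^{\,r}(x)\bigr) = -h_c^{\,r+1}(x)$.

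Specializing to $x = a$: if $h_c^{\,r}(a) = a$ then $h_{-c}^{\,r}(-a) = -h_c^{\,r}(a) = -a$, so $-a$ is periodic for $h_{-c}$ with period dividing $r$. Applying the same intertwining with the roles swapped (replace $c$ by $-c$ and $a$ by $-a$, using $-(-c)=c$ and $-(-a)=a$) gives the reverse divisibility, so the two periods coincide. Hence $(c,a)$ is an $r$-periodic entry of $\PPD(h_c,p)$ if and only if $(-c,-a)$ is, which is precisely the asserted symmetry of the reduced diagram under $(i,j)\mapsto(-i,-j)$.

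There is not really a hard step here; the whole argument reduces to the observation $(-x)^n=-x^n$ together with a one-line induction. The only minor points to be careful about are making sure both directions of the biconditional hold (so that the periods match exactly rather than merely divide one another), and noting that the deletion of the row $x=0$ and column $c=0$ in passing to the reduced PPD is harmless because the involution $(c,a)\mapsto(-c,-a)$ preserves each of those lines individually.
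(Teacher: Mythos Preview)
Your argument is correct and follows essentially the same route as the paper: establish the intertwining $h_{-c}(-x)=-h_c(x)$ from $(-x)^n=-x^n$, iterate it to $h_{-c}^{\,r}(-x)=-h_c^{\,r}(x)$, and read off that $-a$ has the same period under $h_{-c}$ as $a$ does under $h_c$. The only cosmetic difference is that the paper verifies exactness of the period directly via $h_{-c}^{\,d}(-a)=-h_c^{\,d}(a)\neq -a$ for $d<r$, whereas you phrase it as mutual divisibility; both are equally valid.
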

\begin{proof}
	We first observe that  $h_{-c}(-a)=(-a)^n-c= -h_c(a)$ and that
	$h_{-c}^r(-a)= -h_c^r(a)$ for all $r\geq 1$. Assume now that $a$ is an $r$-periodic point of $h_c(x)$. Then
	\[
		h_{-c}^r(-a)= - h_c^r(a)= -a.
	\]
	Moreover, $h_{-c}^d(-a)\neq -a$ for all $d<r$ since $h_c^d(a)\neq a$ for such $d$. Hence $-a$ is an $r$-periodic point of $h_{-c}(x)$.
\end{proof}
\section{Conjectures about the number of periodic points}\label{sec:conjectures}

From Theorem \ref{thm:n_power_desert} it follows that there are exactly $p$ fixed points in total to the set $\{h_c(x):c\in \F_p\}$ of perturbed monomial systems modulo $p$. Hence, among the marked points in the $\PPD$ there are exactly $p$ that are fixed points.

In this section we will consider the problem of computing the number of periodic points for all values of $c$, that is the number of marked points in the $\PPD$. Let $\Per(h_c,p)$ be the number of periodic points of the class of dynamical systems $x\mapsto h_c(x)\bmod{p}$ (the number of marked points in the $\PPD(h_c,p)$). The only case when it is easy to compute $\Per(h_c,p)$ is when $\gcd(n,p-1)=1$. Then we have $\Per(h_c,p)=p^2$. From  \ref{thm:n_power_desert} we have the following rough estimate
\begin{equation}
	p\leq \Per(h_c,p)\leq \frac{p(p-1)}{\gcd(n,p-1)}.
	\label{eqn:per_estimate}
\end{equation}
We have used the fact that there can be at most $p$ periodic points on each non-desert line.
 
We will use a Mathematica program for computing the number of periodic points. The program is based on Floyd's cycle finding algorithm \cite{Floyd:1967,Knuth2:1969}. The naive program described in Section \ref{sec:PPD} for producing the $\PPD$:s is too slow for large primes. Let us look at some examples.
\begin{ex}
 Let us calculate the number of periodic points of $h_c(x)=x^2+c\pmod{p}$ for the first 1000 primes. The result is shown in Figure \ref{fig_1000x2}.
 \begin{figure}
		\includegraphics{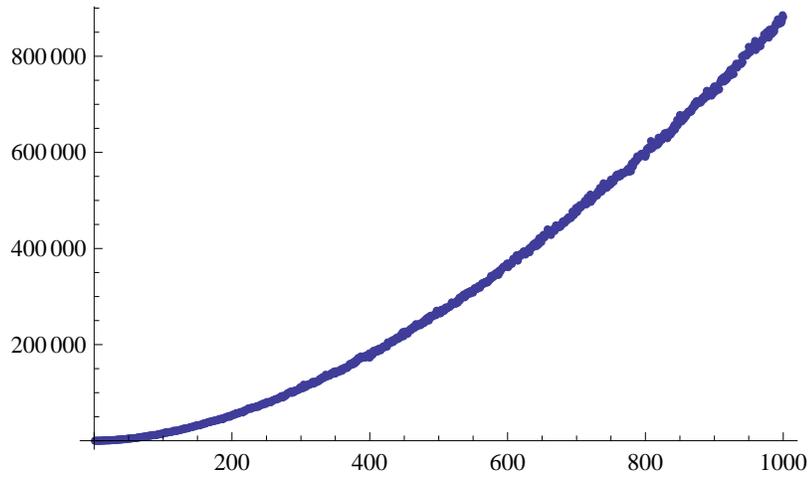}
	\caption{$\Per(x^2+c,p)$ for the first 1000 primes. Note that the value of the horizontal axis is the ordinal of the prime number.}
		\label{fig_1000x2}
\end{figure} 
\end{ex}
\begin{ex}
	Consider the dynamical system $h_c(x)=x^{12}+c$. In Figure \ref{fig1000x12} $\Per(x^{12}+c,p)$ is plotted for the first 1000 primes. Here we see that the diagram seems to contain four different curves. These are branches corresponding to the four possible different values of $\gcd(12,p-1)$ for odd primes $p$. The possible values are $2$, $4$, $6$ and $12$. The lowest branch corresponds to $12$ and the highest to $2$. 
	\begin{figure}
		\includegraphics{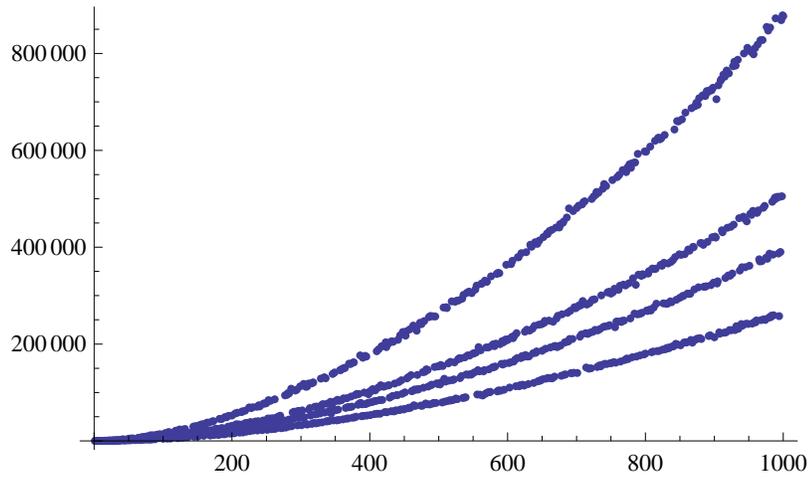}
	\label{fig1000x12}
	\caption{$\Per(x^{12}+c,p)$ for the first 1000 primes.}
\end{figure}
\end{ex}

We call the kind of diagrams shown in the examples \emph{Total Periodic Diagrams} ($\TPD$). The Total Periodic Diagram for $h_c(x)=x^n+c$ for the first $m$ primes is denoted by $\TPD(h_c, m)$. 
\begin{conj}\label{total_conjecture}
	Let $n\geq 2$ be an integer.  Let $h_c(x)=x^n+c\bmod{p}$. The $\TPD(h_c,m)$ then contains as many branches as there are possible values of $\gcd(n,p-1)$.
\end{conj}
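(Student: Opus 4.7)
The plan is to separate the conjecture into two distinct parts and attack them independently. First, a combinatorial/number-theoretic part: enumerate the divisors $d$ of $n$ that actually occur as $\gcd(n,p-1)$ for infinitely many primes $p$, and show this equals the quantity giving the predicted branch count. Second, a dynamical part: show that for primes $p$ lying in the arithmetic progression producing a fixed value $d=\gcd(n,p-1)$, the quantity $\Per(h_c,p)$ concentrates along a single curve $f_d(p)$, and that distinct $d$'s give distinct curves.

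For the first part, I would argue by Dirichlet's theorem on primes in arithmetic progressions. A divisor $d \mid n$ is realized as $\gcd(n,p-1)$ if and only if there exist primes $p$ with $p \equiv 1 \pmod{d}$ but $p \not\equiv 1 \pmod{d\ell}$ for every prime $\ell$ with $d\ell \mid n$. The set of such primes has positive density whenever the corresponding congruence conditions are consistent, and Dirichlet guarantees infinitely many primes in each admissible residue class. From this one obtains an explicit list of possible $d$'s (e.g.\ for $n=12$, only $d\in\{2,4,6,12\}$ survive because $p-1$ is always even), matching the branch count observed.

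For the second, harder part, my plan is to refine the estimate \eqref{eqn:per_estimate}. The idea is that the map $x\mapsto x^n$ on $\F_p^\ast$ is exactly $d$-to-$1$ onto the subgroup $H_d$ of $n$-th power residues of size $(p-1)/d$. Using Theorem \ref{thm:n_power_desert}, the non-desert lines (on which all periodic points live) are in bijection with $H_d\cup\{0\}$, so one expects
\begin{equation*}
\Per(h_c,p) \;=\; \sum_{a\in H_d \cup \{0\}} N_a(p),
\end{equation*}
where $N_a(p)$ counts periodic points on the diagonal $j=i+a$. I would model $h_c$ restricted to a non-desert line as a functional graph whose structure is controlled by the fiber sizes of $x\mapsto x^n$, and apply known random-functional-graph heuristics (à la Flajolet--Odlyzko) adapted to $d$-to-$1$ maps to predict $N_a(p) \sim g(d)\sqrt{p}$ on average, giving a branch curve of the form $f_d(p) = \frac{p-1}{d}\,g(d)\sqrt{p} + O(p)$. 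Since this leading term is a strictly monotone function of $1/d$, different $d$'s produce curves of different orders of magnitude, and the branches do not collide.

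The main obstacle is the dynamical step: the iterates of $h_c$ on $\F_p$ are very far from truly random, and making rigorous the random-map heuristic on each fiber requires either a Weil-type equidistribution argument for the values $(h_c^r(i+a))_{r\ge 0}$ as $(i,a)$ varies, or a direct character-sum estimate that separates the contribution of each $d$. In the absence of such a bound, even proving that the $d=\gcd(n,p-1)$ branches are asymptotically separated (let alone quantifying the separation) seems to require substantial new input, which is presumably why the statement is left as a conjecture rather than a theorem.
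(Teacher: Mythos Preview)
The paper offers no proof of this statement: it is explicitly a \emph{conjecture}, supported only by the numerical plots in Figures~\ref{fig_1000x2} and~\ref{fig1000x12} and the heuristic remark that the branches correspond to the possible values of $\gcd(n,p-1)$. So there is nothing to compare your attempt against; you are not reconstructing an argument but trying to supply one where the author did not.

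Your two-part decomposition is sensible, and your own final paragraph already identifies the real gap accurately. The first part (enumerating which divisors $d\mid n$ arise as $\gcd(n,p-1)$ via Dirichlet) is routine and correct. The second part, however, is not a proof but a heuristic: the Flajolet--Odlyzko random-map model for $d$-to-$1$ functions predicts an average cycle length of order $\sqrt{p/d}$ times a constant depending on $d$, but no one has shown that $x\mapsto x^n+c$ on $\F_p$ actually obeys this model uniformly in $c$. Even the averaged statement $\sum_c \#\{\text{periodic points of }h_c\}\sim C_d\, p^{3/2}$ is, as far as I know, open. A further issue you do not address is that the conjecture as stated is not mathematically precise: the paper never defines what a ``branch'' of the $\TPD$ is, so part of the work would be to \emph{formulate} a rigorous statement (e.g., that $\Per(h_c,p)/f_d(p)\to 1$ along primes with $\gcd(n,p-1)=d$, and that the $f_d$ are pairwise asymptotically distinct) before one can hope to prove it. Your sketch implicitly does this, which is good, but you should be aware that you are simultaneously sharpening the conjecture and attempting it.
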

It would of course also be interesting to find a simple asymptotic estimate for $\Per(x^n+c,p)$ when $p\to\infty$. Thus, we are interested in finding a function $f_d(p)$ such that
\begin{equation}
	\lim_{\substack{p\to \infty\\\gcd(n,p-1)=d}}\frac{\Per(x^n+c,p)}{f_d(p)}=1,
\end{equation}
for a possible value $d$ of $\gcd(n,p-1)$.
\section{Discussion}
Some of the results of this article can be generalized to arbitrary functions $f(x)$ defined on $\F_p$. Desert lines will occur also in the general case. There will be as many desert lines as there are $a\in \F_p$ such that  $f(x)=a$ has no solutions in $\F_p$. It is also possible to extend some of the results to more general structures than the fields $\F_p$, for example the rings of integers modulo an composite integer. 
\bibliographystyle{plain}
\bibliography{marcusfinite}
\end{document}